\newtheorem{theorem}{Theorem}[section]
\newtheorem{lemma}[theorem]{Lemma}
\newtheorem{proposition}[theorem]{Proposition}
\newtheorem{corollary}[theorem]{Corollary}
\numberwithin{equation}{section}
\newcommand{\sign}{\mathrm{sign}}
\tikzset{
    tn/.style={shape=circle, draw, inner sep=1pt, color=black!70},
    ns/.style={shape=circle, draw, inner sep=1pt, fill=black}
}
\title{On Ward Numbers and Increasing Schr\"oder Trees}
\author{Elena L. Wang$^{1}$ and Guoce Xin$^{2}$
\\[2mm]
{\small $^{1}$ Center for Applied Mathematics, Tianjin University, Tianjin, 300072, P.R. China}\\[-0.8ex]
{\small $^{2}$ School of Mathematical Sciences, Capital Normal University, Beijing, 100048, P.R. China}\\[-0.8ex]
{\small $^{1}$ Email address: ling\_wang2000@tju.edu.cn} \\[-0.8ex]
{\small $^{2}$ Email address: guoce\_xin@163.com} }
\begin{document}

\maketitle

\begin{abstract}
The Ward numbers $W(n,k)$ combinatorially enumerate set partitions with block sizes $\geq 2$ and phylogenetic trees (total partition trees). We prove that $W(n,k)$ also counts \emph{increasing Schr\"oder trees} by verifying they satisfy Ward's recurrence. We construct a direct type-preserving bijection between total partition trees and increasing Schröder trees, complementing known type-preserving bijections to set partitions (including Chen's decomposition for increasing Schr\"oder trees). Weighted generalizations extend these bijections to enriched increasing Schröder trees and Schröder trees, yielding new links to labeled rooted trees. Finally, we deduce a functional equation for weighted increasing Schr\"oder trees, whose solution
using Chen's decomposition leads to a combinatorial interpretation of a Lagrange inversion variant.
\end{abstract}

\noindent
\begin{small}
 \emph{AMS subject classification}: 05A15, 05A18, 05C05.
\end{small}

\noindent
\begin{small}
\emph{Keywords}: Ward numbers, increasing
Schr\"oder trees, total partitions,  Lagrange inversions.
\end{small}

\section{Introduction}
The Ward numbers $W(n,k)$, introduced by Ward~\cite{Ward-1934} in his study of Stirling number representations as factorial sums (see OEIS sequences A134991, A181996, and A269939~\cite{OEIS}), satisfy the recurrence for $n \geq 1$, $k \geq 0$:
\begin{align}
\label{e-Ward}
W(n,k) =  k W(n-1,k) + (n+k-1) W(n-1,k-1)
\end{align}
with initial condition $W(0,k) = \delta_{0,k}$, where $\delta_{i,j}$ is the Kronecker delta. Direct verification shows $W(n,n) = (2n-1)!!$, and for $n \geq 1$ we have $W(n,0) = 0$, $W(n,1) = 1$, and $W(n,2) = 2^{n+1} - n - 3$. The row sums $W(n)$ yield sequence A003311~\cite{OEIS}, enumerating \emph{total partitions}. The term ``total partition'' originates from Schr\"oder's fourth problem~\cite{Schroder-1870} on parenthesis arrangements with associativity and commutativity constraints~\cite{Moon-1987,Riordan-1976}. As formalized by Stanley \cite{Stanley}, a total partition recursively decomposes a set into singletons through successive nontrivial partitions (each with $\geq 2$ blocks); for example, the set $[3]=\{1,2,3\}$ admits four total partitions.

Combinatorial interpretations of $W(n,k)$ include:
\begin{enumerate}[label=(\roman*)]
    \item Set partitions with block sizes $\geq 2$ \cite{Carlitz-1971};
    \item Phylogenetic trees (total partition trees) \cite{Schroder-1870,Steel,Price-Sokal-2020}.
\end{enumerate}
We provide a new interpretation (referred to as item (iii)) via \emph{increasing Schr\"oder trees}. 

Schr\"oder trees, introduced by Chen \cite{Chen}, are labeled rooted trees where the set of subtrees of any vertex is endowed with the structure of ordered partitions. Chen established a decomposition of these trees into meadows, providing a unified framework for tree enumeration and Lagrange inversion, as they generalize both rooted trees and plane trees. Motivated by a problem of Gessel, Sagan, and Yeh \cite{Gessel-Sagan-Yeh}, Chen \cite{Chen-1999} later defined \emph{increasing} Schr\"oder trees with a more intricate decomposition algorithm. Both algorithms preserve the \emph{type} of a Schr\"oder tree, defined as the partition type of its non-root vertices.

This paper is organized as follows. Section~\ref{Sec-2} introduces weighted increasing Schr\"oder trees, as they provide the combinatorial framework for our new interpretation of $W(n,k)$. We review Chen's decomposition algorithms and interpret Schr\"oder trees as enriched increasing Schr\"oder trees. Using Chen's bijections, we establish a type-preserving bijection between them and connect enriched increasing Schr\"oder trees to labeled rooted trees. Section~\ref{Sec-3} details three combinatorial interpretations of Ward numbers. We prove combinatorially that (iii) satisfies recurrence~\eqref{e-Ward}. While bijections between (i) and (ii) are given by Erd\H{o}s--Sz\'ekely~\cite{Erdos-Szekely-1989} and Haiman--Schmitt~\cite{Haiman-Schmitt-1989}, and Chen's algorithm links (i) and (iii), we construct a \emph{direct} type-preserving bijection between (ii) and (iii). Weighted Ward numbers are also considered. Finally, Section~\ref{Sec-4} links Chen's decomposition to a variant of Lagrange inversion: the tree structure induces a functional equation whose solution via Chen's decomposition algorithm yields the combinatorial interpretation of Ward numbers, thus bridging enumerative tree theory with analytic inversion identities.

\section{Weighted increasing Schr\"oder trees}
\label{Sec-2}
This section reviews Chen's two decomposition algorithms: one for Schr\"oder trees and another for increasing Schr\"oder trees. The weighted version of the latter structure plays a fundamental role. We establish bijections between enriched increasing Schr\"oder trees and Schr\"oder trees, and between signed enriched increasing Schr\"oder trees and labeled rooted trees.

\subsection{Chen's two decomposition algorithms}
We begin by recalling essential terminology. An increasing tree is a labeled rooted tree where labels increase along every path from the root.
A Schr\"oder tree is a labeled rooted tree in which the subtrees of each vertex are endowed with an ordered partition structure.
It is called an increasing Schr\"oder tree if it is also an increasing tree. The height of a rooted tree is the number of edges on the longest path from the root to a leaf. A small tree is a rooted tree of height one. A meadow is a forest of small trees; it is increasing if all its small trees are increasing.

In what follows, the \emph{weight} of an object is always defined as the product of individual weights, and the weight of a set of objects is defined
as the sum of the weights of its elements, unless specified otherwise. For a Schr\"oder tree $T$ (increasing or not), we assign a weight $g_i$ to each block of size $i$ for all $i$. If the weight of $T$ is $w(T)=g_1^{m_1}g_2^{m_2}\cdots$, then we say $T$ has type $1^{m_1}2^{m_2}\cdots$, meaning $T$ contains $m_i$ blocks of size $i$ for each $i$. For a meadow, we assign the weight $g_i$ to each small tree on $i+1$ vertices, and its type is defined analogously.

Chen \cite{Chen} first introduced Schr\"oder trees to provide a combinatorial interpretation via a sign-reversing involution for cancellations occurring in the Lagrange inversion formula. Chen's first decomposition algorithm is given by the following bijection.
\begin{theorem}[Chen]
There exists a type-preserving bijection $\overline \phi$ from Schr\"oder trees with $n$ vertices and $k$ blocks to meadows with $n + k - 1$ vertices and $k$ small trees.\label{thm:2.1}
\end{theorem}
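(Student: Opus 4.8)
The plan is to realize $\overline\phi$ by a recursive ``peeling'' of a Schr\"oder tree into height-one pieces. Given a Schr\"oder tree $T$ on $n$ vertices with root $r$, one processes the ordered partition $B_1<\dots<B_j$ of the subtrees attached at $r$ one block at a time and recursively decomposes each subtree occurring in those blocks. The decomposition of a block $B_i$ should contribute a single small tree $s_i$, whose $|B_i|$ leaves record the ``tops'' of the (already decomposed) subtrees lying in $B_i$, while the decomposition of those subtrees contributes the remaining small trees. The union of all the small trees so produced, after a careful reassignment of labels --- the $n$ vertices of $T$ together with $k-1$ further symbols, one for every block other than the block hosted first at $r$ --- is declared to be the meadow $\overline\phi(T)$.

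The verification then has three parts. First, the numerics: a block with $i$ subtrees becomes a small tree on $i+1$ vertices, so the type (the multiset of block sizes) is preserved and in particular $w\big(\overline\phi(T)\big)=w(T)$; and a vertex count shows that a Schr\"oder tree on $n$ vertices with $k$ blocks is sent to a forest of $k$ small trees on $(n-1)+k=n+k-1$ vertices. Second, that the image really is a meadow: each piece built is a rooted tree of height exactly one (blocks are nonempty), and the relabeling is arranged so the pieces are pairwise vertex-disjoint. Third --- and this is the crux --- that $\overline\phi$ is a bijection, which one proves by constructing its inverse: from a meadow $M$ with $k$ small trees on $N$ vertices one recovers $n=N-k+1$, identifies which vertices of $M$ are the ``fresh'' symbols and in which order the blocks were peeled off, and reassembles $T$ block by block.

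The hard part will be choosing the relabeling rule precisely enough that this last step works. Naive schemes --- say, keeping the root's first block attached to $r$ and handing out the fresh symbols in depth-first order --- collapse Schr\"oder trees that share the same ``skeleton of blocks'' but differ in nesting: a path $r$--$u$--$v$ and a root $r$ carrying two successive singleton blocks $(\{u\})$, $(\{v\})$ then receive the same meadow. So the relabeling must encode, into the labels of $M$, whether each block is nested inside an earlier block or is merely the next block at the same host vertex; designing a rule with this property, and verifying that the resulting inverse undoes $\overline\phi$ on the nose, is the substantive work, after which weight-preservation and the mutual-inverse identities follow by induction on $n+k$. As an independent check one notes that both families are enumerated by the same series, $\dfrac{(n+k-1)!}{k!}\,[u^{n-1}]\,B(u)^k$ with $B(u)=\sum_{i\ge1}g_iu^i/i!$: for Schr\"oder trees this follows from Lagrange inversion applied to $R=\frac{x}{1-tB(R)}$ with $t$ marking blocks, and for meadows it equals $(n+k-1)!$ times the coefficient of $u^{n+k-1}t^k$ in $\exp\big(t\,uB(u)\big)$; the term-by-term agreement of these two expressions in the variables $g_i$ is the enumerative shadow of the bijection.
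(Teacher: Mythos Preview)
The paper does not prove this theorem; it is quoted as Chen's result and only the statement is given. So there is no ``paper's own proof'' to match. That said, two remarks on your proposal are in order.

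First, your bijective construction is not a proof but a specification of what a proof would have to accomplish. You correctly isolate the difficulty --- the relabeling rule that records, in the meadow, whether a block sits as a sibling or as a nested child --- and you even supply a concrete obstruction (the path $r\text{--}u\text{--}v$ versus the root with two successive singleton blocks). But you then write that ``designing a rule with this property \dots\ is the substantive work'' and stop. That is precisely the content of Chen's algorithm: he gives an explicit rule (based on processing vertices in a definite order and assigning the fresh labels $n+1,\dots,n+k-1$ in a way tied to that order) whose invertibility is the theorem. Without specifying such a rule and checking its inverse, the inductive skeleton you describe does not pin down a map, let alone a bijection.

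Second, your closing generating-function computation is correct and, taken on its own, already proves the bare existence statement: both families have exponential generating function $\dfrac{1}{k!}\bigl(uB(u)\bigr)^k$ in the vertex variable (your Lagrange-inversion count for Schr\"oder trees and your $\exp$ count for meadows both reduce to $\dfrac{(n+k-1)!}{k!}\,[u^{n-1}]B(u)^k$), and since this equality is coefficientwise in the $g_i$ it is already refined by type, so a type-preserving bijection exists for cardinality reasons. If the goal were only the theorem as stated, this would suffice. But note that the paper uses the specific map $\overline\phi$ operationally (e.g.\ in the proof of Theorem~\ref{IS-S}, where one converts each small tree of $\overline\phi(\overline T)$ and then applies $\phi^{-1}$), so for the paper's purposes a non-constructive equinumeracy argument is not a drop-in replacement for Chen's explicit algorithm.
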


For example, Figure~\ref{tree_decomposition} shows a Schr\"oder tree of type $(2,1)$ (left) and its image under $\overline \phi$, a meadow of the same type $(2,1)$ (right). This bijection is remarkably general, specializing to known bijections for specific assignments of the weights $g_i$.

\begin{figure}[htbp]
\centering
\begin{minipage}{0.45\textwidth}
\centering
\begin{tikzpicture}[
  level 1/.style = {level distance=13mm, sibling distance=15mm},
  level 2/.style = {level distance=13mm, sibling distance=15mm}
]
\node [ns,label=90:$1$]{}[grow=down]
    child {node [ns,label=180:{$2$}](2){}
        child {node [ns,label=180:{$3$}](3){}}
        child {node [ns,label=0:{$4$}](4){}}
    };

\draw (2) ellipse (1.6em and 0.7em);
\node [ellipse, draw, fit=(3) (4), y radius=3em, x radius=1em] {};
\end{tikzpicture}
\end{minipage}
\hfill
\begin{minipage}{0.08\textwidth}
  \centering
  \tikz{\node[scale=1.5]{$\longrightarrow$};}
\end{minipage}%
\hfill
\begin{minipage}{0.45\textwidth}
\centering
\begin{tikzpicture}[
  level 1/.style = {level distance=13mm, sibling distance=15mm},
  level 2/.style = {level distance=13mm, sibling distance=15mm}
]
\node [ns,label=90:$2$]{}[grow=down]
    child {node [ns,label=180:{$3$}](3){}}
    child {node [ns,label=0:{$4$}](4){}};
\hspace{6em}
\node [ns,label=180:$1$]{}[grow=down]
    child {node [ns,label=180:{$5$}](5){}};
\end{tikzpicture}
\end{minipage}
\caption{A Schr\"oder tree and its meadow decomposition.}
\label{tree_decomposition}
\end{figure}
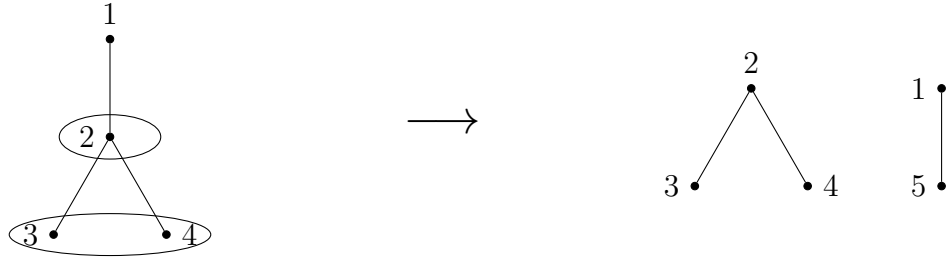

Motivated by questions from Gessel, Sagan, and Yeh \cite{Gessel-Sagan-Yeh} concerning tree enumeration by net inversion number, Chen \cite{Chen-1999} developed his second algorithm involving increasing structures. He introduced increasing Schr\"oder trees and established this decomposition.

\begin{theorem}[Chen]
There is a type-preserving bijection ${\phi}$ from the set of increasing Schr\"oder trees with $n$ vertices and $k$ blocks to the set of increasing meadows with $n + k - 1$ vertices and $k$ small trees.\label{thm:2.2}
\end{theorem}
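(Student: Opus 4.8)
The plan is to construct $\phi$ by a recursion on the number of vertices that refines Chen's first algorithm $\overline\phi$ of Theorem~\ref{thm:2.1} so as to respect the increasing labels, and to read off the inverse from the same recursion. Since an increasing Schr\"oder tree $T$ has its root $\rho$ equal to its smallest label, write the children of $\rho$ as an ordered partition $(B_1,\dots,B_r)$ into blocks, each child being the root of a subtree that is again an increasing Schr\"oder tree; by induction each such subtree $T'$ already has an image $\phi(T')$ that is an increasing meadow with $|T'|+k(T')-1$ vertices and $k(T')$ small trees, where $k(\cdot)$ counts blocks. To build $\phi(T)$: recursively decompose every subtree and take the union of the resulting small trees; in addition, convert each root-block $B_j$ into a fresh small tree $S_j$ on $|B_j|+1$ vertices, rooting $S_1$ at $\rho$ and $S_2,\dots,S_r$ at new symbols, with the $|B_j|$ leaves of $S_j$ acting as pointers---one per child in $B_j$---into the recursively built meadows. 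Then $\phi(T)$ is the disjoint union $S_1\sqcup\cdots\sqcup S_r\sqcup\bigsqcup_{T'}\phi(T')$, relabeled by a canonical order-isomorphism onto $\{1,\dots,|T|+k(T)-1\}$; the order and membership of the blocks at the root, and which subtree a given pointer refers to, are encoded entirely in the labels the relabeling assigns to the fresh symbols.

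Granting such a relabeling, type-preservation and the two cardinality conditions are automatic: a block of size $i$ becomes a small tree on $i+1$ vertices, so the type $1^{m_1}2^{m_2}\cdots$ carries over verbatim, and summing the recursion gives $\sum_{T'}(|T'|+k(T')-1)+\sum_j(|B_j|+1)=(|T|-1)+(k(T)-r)+r=|T|+k(T)-1$ vertices and $(k(T)-r)+r=k(T)$ small trees, consistent with the base case of a one-vertex tree mapping to the empty meadow. The inverse runs the recursion backwards: in an increasing meadow with $k$ small trees, the labels single out the small tree $S_1$ and the root-blocks $S_2,\dots,S_r$ together with their order, the pointer-leaves identify the small-tree-unions to be reassembled and grafted back as the subtrees below $\rho$, and one verifies by induction that the result is an increasing Schr\"oder tree and that this map is two-sided inverse to $\phi$.

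The real obstacle is pinning down the canonical relabeling. It must be chosen so that simultaneously (i) the image is literally a meadow on $\{1,\dots,|T|+k(T)-1\}$; (ii) each small tree is increasing, with its root smaller than every leaf---already on three-vertex trees one sees that this forces the original labels to be permuted and the fresh symbols to be interleaved among them, not merely appended; and (iii) the assignment is injective and its inverse recovers an increasing Schr\"oder tree. This bookkeeping is exactly what separates Chen's second algorithm from the first and is where the bulk of the work lies; once the rule is fixed, the remaining verifications are a routine induction matching the recursive definitions of $\phi$ and $\phi^{-1}$. (If one only wants equinumerosity with matching types rather than an explicit bijection, it suffices to produce a common recurrence: on the tree side insert the largest label as a new leaf---either into an existing block or as a new singleton block---and on the meadow side perform the parallel leaf-or-small-tree insertion, then match initial values.)
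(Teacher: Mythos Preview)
The paper does not give its own proof of this theorem; it is stated with attribution to Chen~\cite{Chen-1999} and accompanied only by the remark that ``$\phi$ is not merely a restriction of $\overline\phi$ to increasing trees. Its construction is significantly more intricate and technical.'' So there is no paper proof to compare against---the result is quoted as background.

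That said, your proposal is not a proof but a proof \emph{scaffold} with the load-bearing step missing, and you say so yourself. You set up a plausible recursive framework---decompose subtrees, create one fresh small tree per root block, relabel---and then write that ``the real obstacle is pinning down the canonical relabeling,'' listing three properties it must satisfy without exhibiting a rule that satisfies them. But that relabeling \emph{is} the theorem. The entire difficulty, as the paper's remark indicates, is that the obvious relabelings fail property~(ii) or~(iii): appending new labels at the top destroys the increasing property of the small trees, and naive interleavings are not invertible. Chen's actual algorithm in~\cite{Chen-1999} is not a simple recursive splice of the type you describe; it processes the tree through a specific sequence of operations (involving a careful ordering of blocks and a non-obvious rule for which labels become roots of new small trees) whose correctness requires several pages of verification. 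Your outline does not recover any of this, and the parenthetical fallback to an equinumerosity argument via matching recurrences would establish only a counting identity, not the bijection the theorem asserts.

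In short: the paper cites rather than proves, and your proposal identifies the shape of the problem correctly but leaves the essential construction blank.
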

Importantly, ${\phi}$ is not merely a restriction of $\overline \phi$ to increasing trees. Its construction is significantly more intricate and technical.

A direct consequence of the two bijections is the following result.
\begin{proposition}
\label{Prop-2.1}
The total weight under $\{g_i\}_{i\geq 1}$ of Schr\"oder trees with $n$ vertices and $k$ blocks equals the total weight
of increasing Schr\"oder trees under $\{(i+1)g_i\}_{i\geq 1}$ with $n$ vertices and $k$ blocks.
\end{proposition}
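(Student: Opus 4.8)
The plan is to compare two generating-function-style weight counts by routing both through meadows, using Chen's two bijections $\overline\phi$ (Theorem~\ref{thm:2.1}) and $\phi$ (Theorem~\ref{thm:2.2}) as black boxes. Fix $n$ and $k$. On the Schr\"oder-tree side, let $S_{n,k}$ be the set of Schr\"oder trees with $n$ vertices and $k$ blocks, and let $S_{n,k}$ carry the weight $w(T)=\prod_i g_i^{m_i(T)}$ where $m_i(T)$ counts blocks of size $i$. On the increasing side, let $I_{n,k}$ be the increasing Schr\"oder trees with $n$ vertices and $k$ blocks, weighted instead by $\widetilde w(T)=\prod_i \bigl((i+1)g_i\bigr)^{m_i(T)}$. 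The claim is $\sum_{T\in S_{n,k}} w(T) = \sum_{T\in I_{n,k}} \widetilde w(T)$.

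First I would translate both sides into statements about meadows with $n+k-1$ vertices and $k$ small trees. By Theorem~\ref{thm:2.1}, $\overline\phi$ is a type-preserving bijection from $S_{n,k}$ to the set $M_{n+k-1,k}$ of such meadows, so $\sum_{T\in S_{n,k}} w(T) = \sum_{F\in M_{n+k-1,k}} w(F)$, where a meadow $F$ of type $1^{m_1}2^{m_2}\cdots$ gets weight $\prod_i g_i^{m_i}$ (a small tree on $i+1$ vertices contributing $g_i$, as set up in Section~\ref{Sec-2}). Likewise, by Theorem~\ref{thm:2.2}, $\phi$ is a type-preserving bijection from $I_{n,k}$ to the set $M^{\mathrm{inc}}_{n+k-1,k}$ of \emph{increasing} meadows of the same size, so $\sum_{T\in I_{n,k}} \widetilde w(T) = \sum_{F\in M^{\mathrm{inc}}_{n+k-1,k}} \widetilde w(F)$, where an increasing meadow of type $1^{m_1}2^{m_2}\cdots$ gets weight $\prod_i \bigl((i+1)g_i\bigr)^{m_i}$.

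So it remains to show $\sum_{F\in M_{n+k-1,k}} w(F) = \sum_{F\in M^{\mathrm{inc}}_{n+k-1,k}} \widetilde w(F)$, and here the point is purely local. An (unordered) meadow is just a forest of small trees on a fixed labeled vertex set $[n+k-1]$; since the $k$ small trees are vertex-disjoint and cover $[n+k-1]$, such a meadow is the same data as a set partition of $[n+k-1]$ into $k$ blocks together with a choice of root in each block. A meadow is increasing precisely when, in each block, the chosen root is the minimum element. Hence for a fixed underlying set partition with block sizes $i_1+1,\dots,i_k+1$, there are $\prod_j (i_j+1)$ ordinary meadows (each block's root chosen freely) but exactly one increasing meadow. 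Grouping the left-hand sum by underlying set partition, each such partition contributes $\prod_j (i_j+1)\,g_{i_j}$ to $\sum_F w(F)$, which is exactly $\prod_j (i_j+1)g_{i_j} = \widetilde w$ of the unique corresponding increasing meadow. Summing over all set partitions of $[n+k-1]$ into $k$ blocks gives the desired equality, completing the proof.

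The only real subtlety — and the step I would state most carefully — is the bookkeeping of the factor $i+1$ versus $i$: a small tree on $i+1$ vertices is a root plus $i$ leaves and carries weight $g_i$, so the "number of ways to choose its root" is $i+1$, not $i$; this is exactly why the weight substitution in the statement is $g_i \mapsto (i+1)g_i$ rather than $i\,g_i$. Everything else is a direct concatenation of Chen's two type-preserving bijections with this one-line counting identity for rootings of a forest of small trees.
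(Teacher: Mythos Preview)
Your proposal is correct and follows essentially the same route as the paper: both arguments transport the two weighted counts to meadows via Chen's bijections $\overline\phi$ and $\phi$, group meadows by their underlying set partition of $[n+k-1]$, and then observe locally that a block of size $i+1$ admits $i+1$ rootings (each weighted $g_i$) versus a unique increasing rooting (weighted $(i+1)g_i$). Your write-up is a bit more explicit about the bookkeeping, but there is no substantive difference in method.
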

\begin{proof}
The proposition follows from the meadow perspective using Chen's two bijections. Consider a set partition $P$ of $[n+k-1]$ without singleton blocks.
It suffices to show that the total weight for meadows over $P$ equals that of the unique increasing meadow over $P$.
This holds because for each block of size $i+1$ ($i \ge 1$),
there are $i+1$ ways to form a small tree, each contributing weight $g_i$,
yielding a total weight of $(i+1)g_i$ per block;
This matches the weight $(i+1)g_i$ of the unique increasing small tree for that block.
\end{proof}

We now consider specific choices of the weight sequence $g_i$.
The total weights of increasing Schr\"oder trees for different choices
of $g$ exhibit many interesting properties, as illustrated in Section \ref{Sec-3.2} on weighted Ward numbers.

In the next subsection, we study weighted increasing Schr\"oder trees for $g_i = i+1$ and its signed version $g_i = (-1)^{i+1}(i+1)$.

\subsection{Enriched increasing Schr\"oder trees}
An \emph{enriched increasing Schr\"oder tree} is an increasing Schr\"oder tree where each block is marked with a $*$ at one of $i+1$ possible positions for a block of size $i$: either before the first vertex or to the right of any vertex in the block.

\begin{theorem}
\label{IS-S}
    There is a type-preserving bijection between the set of enriched increasing Schr\"oder trees with $n$ vertices and $k$ blocks and Schr\"oder trees with $n$ vertices and $k$ blocks.
\end{theorem}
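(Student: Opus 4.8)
The plan is to exploit the weight bookkeeping already established in Proposition~\ref{Prop-2.1} together with Chen's two bijections. First I would record the enumerative identity that the statement encodes: an enriched increasing Schröder tree is an increasing Schröder tree with one extra choice among $i+1$ positions for each block of size $i$, so the set of enriched increasing Schröder trees with $n$ vertices and $k$ blocks is counted, type by type, by the total weight of increasing Schröder trees with $n$ vertices and $k$ blocks under the specialization $g_i = i+1$. On the other side, Schröder trees with $n$ vertices and $k$ blocks under the trivial weights $g_i = 1$ are counted by their plain number, again type by type. Proposition~\ref{Prop-2.1} with $g_i = 1$ says precisely that these two totals agree for every type $1^{m_1}2^{m_2}\cdots$, which proves the theorem at the level of cardinalities refined by type. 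So the content of Theorem~\ref{IS-S} beyond Proposition~\ref{Prop-2.1} is the explicit construction of the bijection; I would make that construction the heart of the proof.

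The construction I would carry out is: apply Chen's increasing algorithm $\phi$ (Theorem~\ref{thm:2.2}) to pass from an increasing Schröder tree to the unique increasing meadow on its underlying non-singleton set partition of $[n+k-1]$; the enriching $*$-marks, one per block of size $i$ chosen among $i+1$ positions, are exactly the data that selects, for each block of size $i+1$ in that set partition, one of the $i+1$ possible (not necessarily increasing) small trees on that block — namely, cut the block at the position of the $*$ and designate the vertex immediately following the $*$ (cyclically, with ``before the first vertex'' wrapping to the last) as the root of the small tree. This turns the increasing meadow plus enrichment into an arbitrary meadow on the same vertex set with the same type; then apply $\overline\phi^{-1}$ (Theorem~\ref{thm:2.1}) to land on a Schröder tree with $n$ vertices and $k$ blocks. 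The composite $\overline\phi^{-1} \circ (\text{enrichment} \mapsto \text{meadow}) \circ \phi$ is a bijection because each of the three arrows is, and it is type-preserving because each of the three arrows is (the middle one by inspection, since it keeps the block sizes of the set partition fixed). I would spell out the middle arrow carefully with a small figure, mirroring Figure~\ref{tree_decomposition}, to make precise how a marked block of size $i$ corresponds to a rooted small tree on $i+1$ vertices.

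The main obstacle is pinning down the middle bijection — between enrichments of the increasing meadow and arbitrary meadows — cleanly and checking that it interacts correctly with Chen's algorithms at the level of types rather than just counts. Concretely, one must verify that $\phi$ really does produce the \emph{unique} increasing meadow on the non-singleton set partition determined by the tree (this is what the proof of Proposition~\ref{Prop-2.1} uses implicitly), and that the $i+1$ enrichment positions of a size-$i$ block match up, in the right order, with the $i+1$ rootings of the corresponding size-$(i+1)$ small tree; a careless matching would break injectivity. A secondary point to get right is the edge convention for how ``$*$ before the first vertex'' versus ``$*$ to the right of vertex $v$'' translates to the choice of root, and that the vertex labels (in particular the label that becomes the new root of a small tree, which need not be the smallest) are handled consistently so that $\overline\phi^{-1}$ can be applied. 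Once the middle arrow is stated precisely, the rest is a routine composition-of-bijections argument, with type preservation inherited factor by factor.
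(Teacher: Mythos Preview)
Your approach is essentially the paper's: pass through meadows via Chen's two bijections $\phi$ and $\overline\phi$, with a middle step on each small tree that trades the $*$-mark (one of $i+1$ positions) for a choice of root (one of $i+1$ vertices). The paper runs the composite in the opposite direction and uses a slightly cleaner middle rule than your cyclic one---``$*$ before the first leaf'' corresponds to the small tree already being increasing, while ``$*$ to the right of $v$'' records that $v$ was the original root before it was swapped with the smallest label---and it explicitly invokes \cite[Theorem~3.5]{Chen-1999} to justify that $\phi$ carries the $*$-structure from blocks to small-tree leaves, a point you should also cite rather than leave implicit.
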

\proof
Given a Schr\"oder tree $\overline{T}$, apply Chen's first bijection to obtain a meadow $\overline{M} = \overline{\phi}(\overline{T})$. For each small tree $\overline{S}$
in $\overline{M}$, convert it to an increasing small tree and mark it with a $*$ as follows:
(i) if $\overline{S}$ is increasing, place a $*$ before the first leaf;
(ii) if the $j$th leaf is the smallest, swap it with the root and place a $*$ to the right of the original root (now a leaf).

Now we obtain an increasing meadow $M$, where each small tree is endowed with a $*$ structure on its leaves. Apply Chen's second bijection ${\phi}^{-1}$ to $M$ and carry the $*$ structure (by \cite[Theorem~3.5]{Chen-1999}, $\phi$ carries combinatorial structures). The result is
an increasing Schr\"oder tree with each block marked by a $*$, giving the enriched tree.

This is a type-preserving bijection as each step is invertible and preserves the block structure.
\qed

A signed Schr\"oder tree corresponds to $g_i = (-1)^{i+1}$. Thus a block of size $i$
has weight $(-1)^{i+1}$, and a Schr\"oder tree $T$ on $n$ vertices with blocks $B_1,\dots, B_k$
has sign $\prod_{j=1}^k (-1)^{|B_j|+1} = (-1)^{n+k-1}$. The sign of an (enriched) increasing Schr\"oder tree is defined analogously.

\begin{theorem}
\label{SSch}
There is a sign-reversing involution $\psi_n$ on signed Schr\"oder trees on $n$ vertices with the following properties:
\begin{enumerate}
  \item $\psi_n$ preserves the underlying rooted tree structure, hence applies to signed increasing Schr\"oder trees.
  \item The fixed points of $\psi_n$ are Schr\"oder trees where all blocks are singletons, and the children of each internal vertex have increasing labels. 
\end{enumerate}
Consequently, the signed count of Schr\"oder trees on $n$ vertices is $n^{n-1}$, and the signed count of increasing Schr\"oder trees on $n$ vertices
is $(n-1)!$.
\end{theorem}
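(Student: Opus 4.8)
The plan is to realise $\psi_n$ through a single \emph{local} modification at one internal vertex, which makes Property~1 automatic, and to reduce the whole construction to a sign-reversing involution on ordered set partitions. At an internal vertex $v$ of a Schr\"oder tree, the Schr\"oder structure is an ordered set partition (OSP) of the set of subtrees hanging from $v$; identify those subtrees with $\{1,\dots,d\}$ via the increasing order of their (distinct) root labels, and weight a block of size $i$ by $(-1)^{i+1}$. The key lemma I would prove is: on the set of OSPs of a finite totally ordered ground set there is a sign-reversing involution $\iota$, with unique fixed point the OSP into increasing singletons $(\{1\},\{2\},\dots)$. (This amounts to the classical identity $\sum_\pi(-1)^{\#\text{blocks}(\pi)}=(-1)^d$ over OSPs $\pi$ of a $d$-set.) Granting the lemma, define $\psi_n$ as follows: if every internal vertex of $T$ carries its increasing-singleton OSP, then $T$ is a fixed point; otherwise let $v^\ast$ be the internal vertex of smallest label whose OSP is not of that form, and replace the OSP at $v^\ast$ by its image under $\iota$, leaving the labeled tree and all other OSPs untouched.

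Constructing $\iota$ and verifying it is an involution is the step I expect to be the main obstacle. I would define it recursively. Let $u$ be the smallest element of the ground set and write $\pi=(B_1,\dots,B_m)$. If $B_1=\{u\}$, recurse on $(B_2,\dots,B_m)$ and re-attach $\{u\}$ in front; a singleton block has weight $+1$, so this preserves both the sign behaviour and, by induction, the fixed point. If $B_1\neq\{u\}$, find the block $B$ containing $u$: if $|B|\geq 2$, split it, replacing $B$ by the two consecutive blocks $B\setminus\{u\}$ and $\{u\}$ in that order; if $B=\{u\}$ (which forces it into some position $\geq 2$), merge it leftwards, replacing the adjacent pair $(\text{preceding block},\{u\})$ by their union. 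The point to check is that each of these moves leaves the first block different from $\{u\}$, so it stays in the ``$B_1\neq\{u\}$'' branch and is inverted by the other move; this follows from a short case analysis on whether $u$ lies in $B_1$ or in a later block. Splitting raises, and merging lowers, the block count by one, so $\iota$ reverses the sign away from its unique fixed point, which by construction is the increasing-singleton OSP.

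For the assembly, since $\iota$ only regroups the subtrees already hanging at $v^\ast$, the map $\psi_n$ leaves the underlying rooted tree unchanged; this is Property~1 and it shows that $\psi_n$ restricts to an involution on signed increasing Schr\"oder trees. Because $\iota$ carries non-fixed OSPs to non-fixed OSPs (its only fixed point is the increasing-singleton one) and alters no other vertex, the distinguished vertex $v^\ast$ is the same for $T$ and $\psi_n(T)$, so $\psi_n^2=\mathrm{id}$; and since $\sign(T)$ is the product over internal vertices of the signs of their OSPs, reversing the sign at $v^\ast$ reverses $\sign(T)$. The fixed points of $\psi_n$ are exactly the Schr\"oder trees all of whose internal vertices carry increasing-singleton OSPs, i.e.\ all blocks are singletons and the children of each internal vertex are listed in increasing label order, which is Property~2.

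Finally, for the enumeration: at a fixed point every block is a singleton, hence has sign $(-1)^{1+1}=1$ and the whole tree has sign $+1$, so by the standard sign-reversing cancellation the signed number of Schr\"oder trees on $n$ vertices equals the number of fixed points. Forgetting the now-forced OSP data identifies these fixed points with labeled rooted trees on $[n]$, of which there are $n^{n-1}$ by Cayley's formula. Restricting $\psi_n$ to signed increasing Schr\"oder trees, the fixed points become increasing labeled rooted trees on $[n]$ (the root must be $1$, and each $i\geq 2$ may be attached below any of $1,\dots,i-1$), of which there are $(n-1)!$. This yields the two asserted counts.
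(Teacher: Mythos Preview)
Your proof is correct and rests on the same key lemma as the paper: a sign-reversing involution on ordered set partitions whose unique fixed point is the increasing-singleton partition (the paper's Lemma~\ref{Grigory}). Your recursive description of $\iota$ is in fact the same map as the paper's $\psi'$, just presented by peeling off the leading singleton rather than by locating the maximal increasing-singleton prefix in one step; the paper even remarks that Grigory's original construction was recursive in exactly this way. Where you genuinely differ is in the \emph{selection rule} for the vertex at which to act: the paper defines $\psi_n$ by recursing on the subtrees of the root, descending into the first non-fixed subtree and applying $\psi'$ at the root only when all subtrees are already fixed, whereas you pick directly the internal vertex of smallest label whose local ordered partition is non-fixed. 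These two rules produce different involutions in general (for instance, when both the root and some proper subtree carry non-fixed ordered partitions), but both are valid sign-reversing involutions with the same fixed-point set. Your label-based selection is arguably cleaner---it avoids the subtree recursion entirely and makes $\psi_n^2=\mathrm{id}$ immediate from the fact that $\iota$ carries non-fixed partitions to non-fixed partitions---while the paper's approach has the structural feature of commuting with restriction to subtrees.
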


\begin{proof}
We construct $\psi_n$ recursively. The base case $\psi_1$ is trivial. Assume
$\psi_m$ is defined for all $m < n$.
Given a Schr\"oder tree $T$ on $n$ vertices,
let the root have children $v_1, \dots, v_s$ with subtrees $T_i$ on $m_i$ vertices rooted at $v_i$. Then $m_i < n$ for each $i$.
Apply $\psi_{m_1}$ to $T_1$: if $T_1$ is not a fixed point,
define $\psi_n(T)$ by replacing $T_1$ with $\psi_{m_1}(T_1)$;
otherwise, if $T_1,\dots,T_{i-1}$ are fixed points but $T_i$ is not,
define $\psi_n(T)$ by replacing $T_i$ with $\psi_{m_i}(T_i)$.

For the case where all $T_1, \dots, T_s$ are fixed points,
define an auxiliary map $\psi'$ acting on the ordered partition of the set of children $\{v_1, \dots, v_s\}$ as in Lemma \ref{Grigory} below.
Then $\psi_n(T)$ preserves each subtree but applies $\psi'$ to the partition of $\{v_1, \dots, v_s\}$.

The properties of $\psi_n$ follow from the construction. The consequences hold since:
fixed-point Schr\"oder trees correspond to rooted trees (counted by $n^{n-1}$);
fixed-point increasing Schr\"oder trees correspond to increasing rooted trees (counted by $(n-1)!$).
\end{proof}

Combining Theorem \ref{SSch} and Proposition \ref{Prop-2.1} yields:
\begin{corollary}
The signed count of enriched increasing Schr\"oder trees on $n$ vertices is $n^{n-1}$.
\end{corollary}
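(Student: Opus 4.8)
The plan is to combine the two results the corollary explicitly cites. By Theorem~\ref{SSch}, the signed count of Schr\"oder trees on $n$ vertices — where a block of size $i$ carries weight $(-1)^{i+1}$, equivalently $g_i=(-1)^{i+1}$ — equals $n^{n-1}$. By Theorem~\ref{IS-S}, enriched increasing Schr\"oder trees with $n$ vertices and $k$ blocks are in type-preserving bijection with Schr\"oder trees with $n$ vertices and $k$ blocks; since the sign $(-1)^{n+k-1}$ of such an object depends only on $n$ and $k$, the bijection is automatically sign-preserving. Hence the signed count of enriched increasing Schr\"oder trees on $n$ vertices equals the signed count of Schr\"oder trees on $n$ vertices, which is $n^{n-1}$.

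More carefully, I would write: for each fixed $k$, Theorem~\ref{IS-S} gives a bijection between the enriched increasing Schr\"oder trees on $n$ vertices with $k$ blocks and the Schr\"oder trees on $n$ vertices with $k$ blocks. Every object in either class of ``type having $k$ blocks on $n$ vertices'' has the same sign $(-1)^{n+k-1}$, as recorded in the paragraph preceding Theorem~\ref{SSch} for Schr\"oder trees and extended verbatim to the enriched increasing case there. Summing over $k$,
\[
\sum_{\text{enriched incr.\ Schr\"oder trees } T \text{ on } n \text{ vtx}} \operatorname{sign}(T)
= \sum_{k\ge 1} (-1)^{n+k-1}\, e_{n,k}
= \sum_{k\ge 1} (-1)^{n+k-1}\, s_{n,k},
\]
where $e_{n,k}$ and $s_{n,k}$ denote the cardinalities of the two $k$-block classes and $e_{n,k}=s_{n,k}$ by Theorem~\ref{IS-S}. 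The right-hand side is precisely the signed count of Schr\"oder trees on $n$ vertices, which equals $n^{n-1}$ by Theorem~\ref{SSch}.

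The reference to Proposition~\ref{Prop-2.1} in the corollary statement is, strictly speaking, an alternative route rather than a strictly necessary one: Proposition~\ref{Prop-2.1} relates the $\{g_i\}$-weight of Schr\"oder trees to the $\{(i+1)g_i\}$-weight of increasing Schr\"oder trees, and with $g_i=(-1)^{i+1}$ this says the signed count of Schr\"oder trees equals the signed count of increasing Schr\"oder trees weighted by $(i+1)$ per block of size $i$ — but an increasing Schr\"oder tree with the weight $i+1$ attached to each size-$i$ block is exactly an enriched increasing Schr\"oder tree (the $i+1$ choices being the $*$-positions), so this reproduces the claim. I would present the Theorem~\ref{IS-S} argument as the main line and mention the Proposition~\ref{Prop-2.1} route in a sentence.

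There is essentially no obstacle here: the only point requiring a word of care is the observation that the type-preserving bijection of Theorem~\ref{IS-S} is automatically sign-preserving, which is immediate because the sign is a function of the type (indeed of $(n,k)$ alone). The proof is therefore a two-line deduction, and I would keep it that short.
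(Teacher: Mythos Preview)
Your proof is correct and essentially matches the paper's: the corollary is stated as an immediate consequence of Theorem~\ref{SSch} together with Proposition~\ref{Prop-2.1}, which is precisely the route you describe in your final paragraph (and your Theorem~\ref{IS-S} route is just the bijective refinement of that same weight identity). The only cosmetic difference is emphasis---the paper foregrounds Proposition~\ref{Prop-2.1} rather than Theorem~\ref{IS-S}---but since you already spell out both routes and the sign-preservation point, there is nothing to add.
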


Let $[n] = \{1, 2, \dots, n\}$ and $\mathrm{OP}(n)$ be the set of ordered partitions of $[n]$, i.e., sequences $[B_1, \dots, B_k]$ where $B_i$ are nonempty, disjoint, and $\bigcup_i B_i = [n]$. The sign of $B_i$ is $\sign(B_i) = (-1)^{|B_i| + 1}$, and the sign of $op = [B_1, \dots, B_k]$ is
\[
\sign(op) = \prod_{i=1}^k \sign(B_i) = (-1)^{n + k}.
\]

\begin{lemma}
\label{Grigory}
For all integers $n \geq 1$, there is a sign-reversing involution $\psi'$ on $\mathrm{OP}(n)$ with the unique fixed point $[\{1\}, \{2\}, \dots, \{n\}]$.
\end{lemma}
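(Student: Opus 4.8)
The plan is to construct $\psi'$ by induction on $n$, exploiting the location of the element $n$ (or equivalently $1$) within an ordered partition $op = [B_1,\dots,B_k] \in \mathrm{OP}(n)$. The guiding idea is that a sign-reversing involution on ordered set partitions should, at each step, either \emph{merge} two adjacent blocks into one or \emph{split} one block into two adjacent ones, since merging two blocks of sizes $a,b$ into one block of size $a+b$ changes the sign by a factor $(-1)^{(a+b+1)}/\big((-1)^{a+1}(-1)^{b+1}\big) = -1$, which is exactly what a sign-reversing move requires. The only ordered partition on which no such move is available in a canonical way is the all-singletons partition $[\{1\},\dots,\{n\}]$, which will be the fixed point; note $\sign([\{1\},\dots,\{n\}]) = (-1)^{n+n} = 1$, consistent with it being an unpaired fixed point.

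Concretely, I would isolate the block $B_j$ containing the element $n$ and proceed recursively on the ordered partition $op'$ of $[n-1]$ obtained by deleting $n$ (deleting the whole block $B_j$ if $B_j = \{n\}$, otherwise just removing $n$ from $B_j$). First I would run the inductive involution $\psi'_{n-1}$ on $op'$; if $op'$ is not the all-singletons fixed point of $\mathrm{OP}(n-1)$, then $\psi'_n(op)$ is defined by re-inserting $n$ into the result of $\psi'_{n-1}(op')$ in the position mirroring where it was removed, and this is sign-reversing because $\psi'_{n-1}$ is and the re-insertion is sign-neutral (it restores the same block-size multiset change). When $op'$ \emph{is} the all-singletons partition — i.e.\ after removing $n$, everything of $[n-1]$ is a singleton — only a bounded number of configurations for $op$ remain: $n$ sits in its own singleton block at some position $p$ among $\{1\},\dots,\widehat{\{p\text{-th slot}\}},\dots$, or $n$ sits together with exactly one other element $m$ in a block $\{m,n\}$ that is otherwise flanked by singletons. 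On this finite set I would define $\psi'_n$ by hand: pair the configuration ``$n$ is a singleton immediately following the singleton block $\{m\}$'' with the configuration ``$\{m,n\}$ is a single block in that slot,'' i.e.\ merge/split $n$ with its left neighbor. The unique configuration with no left neighbor to merge with is $[\{1\},\{2\},\dots,\{n\}]$ itself, which becomes the fixed point; one must check every other all-singletons-after-deletion configuration has a well-defined partner (the singleton $\{1\}$ at the front, if $n$ is not adjacent to it, is handled because $n$ always has \emph{some} left neighbor unless $n$'s block is the first block).

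The two things to verify carefully are that $\psi'_n$ is a genuine involution — the recursive branch is an involution because $\psi'_{n-1}$ is and re-insertion commutes with it, and the base branch is an involution because merge and split are mutually inverse — and that the branches are consistent, i.e.\ the recursive branch never outputs something landing in the base branch's domain or vice versa; this is automatic since $\psi'_{n-1}$ maps non-fixed-points to non-fixed-points and fixed-points to fixed-points, so the ``$op'$ is/ is not the all-singletons partition'' dichotomy is preserved. The main obstacle I expect is pinning down the base-case pairing so that it is symmetric and exhausts all remaining configurations without leaving an accidental second fixed point — in particular handling the boundary behavior of the element $n$ when its block is the leftmost block, and making sure a ``split'' of $\{m,n\}$ places $\{m\}$ and $\{n\}$ in the correct relative order so that applying the rule again merges them back. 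Once the base pairing is chosen with ``$n$ always interacts with its immediate left neighbor,'' uniqueness of the fixed point $[\{1\},\dots,\{n\}]$ is immediate, and the sign-reversing property is the elementary observation about block-size parity noted above.
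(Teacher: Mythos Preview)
Your recursive plan has a genuine obstruction in the inductive step, not just a detail to be filled in. You propose to delete the element $n$, apply $\psi'_{n-1}$ to the resulting $op' \in \mathrm{OP}(n-1)$, and then ``re-insert $n$ in the mirrored position.'' But the number of ways to insert $n$ into $op'$ depends on the block structure of $op'$, and $\psi'_{n-1}$ changes that structure. Concretely, for $n=3$ there are five ordered partitions of $[3]$ whose $3$-deletion is $[\{2\},\{1\}]$ (three with $\{3\}$ a singleton, two with $3$ adjoined to an existing block) but only three whose $3$-deletion is $[\{1,2\}]$ (two with $\{3\}$ a singleton, one with $3$ adjoined). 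Since $\psi'_2$ pairs $[\{2\},\{1\}]$ with $[\{1,2\}]$, no ``re-insertion'' rule can give a bijection between these two fibers of sizes $5$ and $3$. So the scheme ``delete $n$, recurse, re-insert'' cannot produce an involution at all.

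There is also a smaller inconsistency in your base case: with the rule ``merge $n$ with its \emph{left} neighbour,'' the configuration with no partner is $[\{n\},\{1\},\dots,\{n-1\}]$ (where $n$'s block is first), not the claimed fixed point $[\{1\},\dots,\{n\}]$ (where $n$ does have a left neighbour $\{n-1\}$). This is repairable by switching to the right neighbour, but the fiber-size obstruction above is not.

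The paper's (non-recursive) construction avoids this by never deleting an element from the interior. It finds the longest prefix $B_1=\{1\},\dots,B_i=\{i\}$ of correctly placed singletons, locates the block $B_s$ containing $i+1$, and then either merges $B_{s-1}\cup B_s$ (if $B_s=\{i+1\}$) or splits $B_s$ into $B_s\setminus\{i+1\}$ followed by $\{i+1\}$. The recursive description equivalent to this is: if $B_1=\{1\}$, strip off the \emph{block} $B_1$ and recurse on $[B_2,\dots,B_k]$; otherwise act on element $1$. Stripping a whole leading block keeps the correspondence between fibers bijective, which is exactly what your element-deletion does not.
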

This lemma is due to Grigory, as we will explain. Here we present a direct construction of the involution.

\begin{proof}
Let $\widetilde{OP}(n)$ denote the set $OP(n) \setminus \{[\{1\}, \{2\}, \dots, \{n\}]\}$, that is, all ordered partitions of $[n]$ except the one consisting of $n$ singleton blocks in natural order.
It suffices to define a sign-reversing involution $\psi': \widetilde{OP}(n) \to \widetilde{OP}(n)$ that pairs each ordered partition with another of opposite sign.

Given $op \in \widetilde{OP}(n)$, let $i$ be the largest index such that $B_\ell = \{\ell\}$ for $\ell = 1,2,\dots, i$. Note that $i \leq n-2$ because $op \neq [\{1\}, \{2\}, \dots, \{n\}]$. Then the element $i+1$ lies in some block $B_s$ with $s \geq i+1$. We distinguish two cases based on whether $B_s = \{i+1\}$.

\begin{enumerate}[label=\textbf{Case \arabic*.}, leftmargin=2em]
    \item If $B_s = \{i+1\}$, define $\psi'(op)$ by merging blocks $B_{s-1}$ and $B_s$ into a single block. Note that $s > i+1$ by the maximality of $i$.

    \item If $B_s \neq \{i+1\}$, define $\psi'(op)$ by splitting $B_s$ into two blocks $B_s \setminus \{i+1\}$ and $\{i+1\}$.
\end{enumerate}

Clearly, $\psi'$ maps Case 1 elements to Case 2 elements, and vice versa. Moreover, $\psi'^2$ is the identity map and $\sign(\psi'(op)) = -\sign(op)$.
Thus $\psi'$ is the desired sign-reversing involution.
\end{proof}

For example, given $n=3$, there are $12$ elements in $\widetilde{OP}(3)$.

\begin{table}[htbp]
\centering
\setlength{\tabcolsep}{10pt}   
\renewcommand{\arraystretch}{1.25} 
\small
\caption{An example of the involution $\psi'$.}
\label{tab:example}
\begin{tabular}{@{}c c c c@{}}
\toprule
\boldmath$op$ & \boldmath$\psi'(op)$ & \boldmath$\sign(op)$ & \boldmath$\sign(\psi'(op))$ \\
\midrule
$[\{1\},\{3\},\{2\}]$ & $[\{1\},\{2,3\}]$ & $+$ & $-$ \\
$[\{2\},\{1\},\{3\}]$ & $[\{1,2\},\{3\}]$ & $+$ & $-$ \\
$[\{2\},\{3\},\{1\}]$ & $[\{2\},\{1,3\}]$ & $+$ & $-$ \\
$[\{3\},\{1\},\{2\}]$ & $[\{1,3\},\{2\}]$ & $+$ & $-$ \\
$[\{3\},\{2\},\{1\}]$ & $[\{3\},\{1,2\}]$ & $+$ & $-$ \\
$[\{1,2,3\}]$         & $[\{2,3\},\{1\}]$ & $+$ & $-$ \\
\bottomrule
\end{tabular}
\end{table}

The involution in Lemma~\ref{Grigory} yields a combinatorial proof of the identity
\begin{align}
\sum_{k=0}^n (-1)^k k! S(n,k) = (-1)^n,
\end{align}
where $S(n,k)$ denotes the Stirling numbers of the second kind.

A related question on how to give a combinatorial proof of this identity was raised on Math StackExchange.\footnote{See \texttt{https://math.stackexchange.com/questions/395139}} In that discussion, M. Grigory gave a bijective proof of the identity using the interpretation of \( k! S(n,k) \) as the number of ordered set partitions of an \( n \)-element set into \( k \) blocks. Indeed, he recursively defined an involution, which is equivalent to our non-recursive version.

\section{Ward numbers}
\label{Sec-3}
In this section, we discuss three combinatorial interpretations of the Ward numbers:
the first, due to Carlitz, counts set partitions where each block has at least two elements;
the second, due to Steel, is expressed in terms of total partitions;
and the third involves increasing Schr\"oder trees.
To the best of our knowledge, the third interpretation is new.

\subsection{Combinatorial interpretations of Ward numbers}
Ward numbers, recursively defined by \eqref{e-Ward}, admit various combinatorial interpretations. Here we introduce three of them and consider their weighted versions.

Let $S_2(n,k)$ denote the number of partitions of an $n$-element set into $k$ nonempty subsets, each of size at least two. We assign the weight $g_i$ to a block of size $i+1$.

A total partition of the set $[n]$ is a process that recursively partitions non-singleton blocks into at least two nonempty subsets until only singletons remain. This process has a natural representation by a semi-labeled rooted tree, which is a rooted tree with labeled leaves and unlabeled internal vertices. A semi-labeled rooted tree with each internal vertex having degree at least $2$ is referred to as a \emph{total partition tree}. For a direct correspondence between total partitions and total partition trees, see \cite{Stanley}. To recover the total partition from its tree representation, we associate each internal vertex with the set of its leaf descendants. We assign the weight $g_i$ to an internal vertex of degree $i+1$. Similar to the case for Schr\"oder trees, for the objects considered here, if the weight is $g_1^{m_1}g_2^{m_2}\cdots$, then the type is defined to be $1^{m_1}2^{m_2}\cdots$.

\begin{proposition}
The following quantities are all equal to the Ward number $W(n,k)$.
\begin{enumerate}
  \item $S_2(n+k,k)$;
  \item The number of total partition trees on $[n+1]$ with $k$ internal vertices;
  \item The number of increasing Schr\"oder trees with $n+1$ vertices and $k$ blocks.
\end{enumerate}
\end{proposition}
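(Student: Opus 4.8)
The plan is to verify that each of the three quantities satisfies Ward's recurrence~\eqref{e-Ward} together with the boundary value $\delta_{0,k}$ at $n=0$; since this data determines a double sequence uniquely, all three must equal $W(n,k)$. Interpretations (i) and (ii) are classical --- due to Carlitz~\cite{Carlitz-1971} and Steel~\cite{Steel} --- so I would only recall the quick recurrences. For (i), setting $a(n,k)=S_2(n+k,k)$ and splitting a partition of $[n+k]$ into $k$ blocks of size $\geq 2$ according to the block $B$ containing $n+k$: if $|B|=2$, delete $B$ (its other element is one of $n+k-1$ choices, and what remains is counted by $a(n-1,k-1)$); if $|B|\geq 3$, delete $n+k$ alone ($k$ choices of host block, what remains counted by $a(n-1,k)$). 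This is exactly~\eqref{e-Ward}, and $a(0,k)=S_2(k,k)=\delta_{0,k}$. For (ii), let $b(n,k)$ count total partition trees on $[n+1]$ with $k$ internal vertices and delete leaf $n+1$: if its parent has $\geq 3$ children the parent survives, giving a tree on $[n]$ with $k$ internal vertices that can be reinserted at any of its $k$ internal vertices; if the parent has exactly $2$ children it must be contracted, giving a tree on $[n]$ with $k-1$ internal vertices and hence $n+(k-1)$ vertices total, any one of which can be subdivided to reattach $n+1$. Again this is~\eqref{e-Ward} with $b(0,k)=\delta_{0,k}$.

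The substantive case is (iii). Let $c(n,k)$ be the number of increasing Schr\"oder trees on $[n+1]$ with $k$ blocks, so $c(0,k)=\delta_{0,k}$. Because labels increase from the root, $n+1$ is always a leaf; I would run the recurrence by examining how $n+1$ occupies the ordered partition of children of its parent $v$. Write $b_v$ for the number of blocks in the ordered partition at $v$, so that $\sum_v b_v$ over all vertices is the total number of blocks. The deletion of $n+1$ splits into two cases: either the block of $v$ containing $n+1$ has size $\geq 2$, and removing $n+1$ from it leaves an increasing Schr\"oder tree on $[n]$ with the same number of blocks; or $\{n+1\}$ is a singleton block of $v$, and removing that block leaves an increasing Schr\"oder tree on $[n]$ with one fewer block. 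Conversely, given an increasing Schr\"oder tree $T'$ on $[n]$, one reinserts $n+1$ either by appending it to one of the blocks of $T'$ --- there are exactly (number of blocks of $T'$) such positions, and the block count is unchanged --- or by inserting the new singleton block $\{n+1\}$ into one of the $b_v+1$ gaps of the ordered partition at some vertex $v$ (when $v$ is a leaf of $T'$ this is the single gap that turns $v$ into an internal vertex), of which there are $\sum_v(b_v+1)=n+(\text{number of blocks of }T')$, with the block count going up by one. Letting $T'$ range over increasing Schr\"oder trees on $[n]$ with $k$ blocks in the first case and with $k-1$ blocks in the second gives
\[
c(n,k)=k\,c(n-1,k)+\bigl(n+(k-1)\bigr)\,c(n-1,k-1),
\]
which is~\eqref{e-Ward}.

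The one point requiring care is the position count in (iii): one must confirm that the two reinsertion operations are mutually exclusive and jointly exhaustive, which holds because they are inverse to the two deletion cases, and those cases partition all increasing Schr\"oder trees on $[n+1]$ according to the size of the block containing $n+1$. Once this is set up, the collapse of the counts to $\sum_v b_v$ and $\sum_v(b_v+1)=n+\sum_v b_v$ is automatic, and, conveniently, a leaf of $T'$ needs no separate treatment since it simply has $b_v=0$; this is precisely what produces the coefficients $k$ and $n+k-1$. One could alternatively derive (iii) from bijections established elsewhere in the paper (Chen's algorithm to set partitions, or the direct bijection to total partition trees), but the recurrence check is the most economical route.
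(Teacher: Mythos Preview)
Your proof is correct and follows essentially the same approach as the paper: verifying Ward's recurrence~\eqref{e-Ward} combinatorially for each interpretation by deleting the largest element. The paper handles items (i) and (iii) by the same argument you give (phrasing the $n+k-1$ count in (iii) as ``leftmost block, giving $n$ choices of parent, or immediately right of one of the $k-1$ existing blocks'' rather than your equivalent $\sum_v(b_v+1)$), and simply cites item (ii) as well-known rather than supplying the recurrence argument you sketch.
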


The first item is due to Carlitz \cite{Carlitz-1971}, who established $W(n,k) = S_2(n+k,k)$ by comparing expressions for Stirling numbers of the second kind involving sums over either $W(n,k)$ or $S_2(n,k)$. The second item is well-known; see \cite{Price-Sokal-2020}.

Here we provide a combinatorial argument for items 1 and 3 from the recurrence perspective.
\begin{proof}[Recurrence proof of items 1 and 3]
For item 1, it suffices to show that for $n \ge 2$, $k \ge 1$,
\begin{align*}
    S_2(n,k)= k\,S_2(n-1,k)+(n-1)\,S_2(n-2,k-1),
\end{align*}
with initial conditions $S_2(n,0)=\delta_{n,0}$ for $n \ge 0$, $S_2(0,k)=\delta_{k,0}$, and $S_2(1,k)=0$ for $k \ge 0$.

Consider a partition of $[n]$ into $k$ subsets. We examine the position of element $n$.

\textbf{Case 1:} $n$ lies in a block of size at least three. Removing $n$ yields a partition of $[n-1]$ into $k$ subsets. Since $n$ can be reinserted into any of the $k$ blocks, this case contributes $kS_2(n-1,k)$ partitions.

\textbf{Case 2:} $n$ lies in a block of size two. Removing the block containing $n$ leaves a partition of an $(n-2)$-element set into $k-1$ subsets. The $n-1$ choices for the element paired with $n$ yield $(n-1)S_2(n-2,k-1)$ partitions.

The cases are disjoint and exhaustive. Verification of initial conditions is straightforward. This completes the proof for item 1.

For item 3, denote by $T(n,k)$ the number of increasing Schr\"oder trees with $n+1$ vertices and $k$ blocks. We show that $W(n,k)$ and $T(n,k)$ satisfy the same recurrence relations and initial conditions. Define $T(0,0)=1$ and $T(n,0)=0$ for $n \ge 1$. For $n \ge 1$ and $k \ge 1$, we establish
\begin{align*}
    T(n,k) = k T(n-1,k) + (n+k-1) T(n-1,k-1).
\end{align*}

Consider the block containing the leaf $n+1$ in an increasing Schr\"oder tree with $k$ blocks. The case where this block is not a singleton is counted in $kT(n-1,k)$ ways, with the factor $k$ corresponding to the choices for the block into which $n+1$ is inserted. The case where the block is a singleton is counted in $(n + k - 1)T(n-1,k-1)$ ways. The factor $(n + k - 1)$ arises from two alternatives: either the block containing $n+1$ is the leftmost child of its parent node (giving $n$ choices for the parent node), or it is immediately to the right of another block of the parent node (giving $k - 1$ choices for the adjacent block).
\end{proof}

Additionally, Price and Sokal \cite{Price-Sokal-2020} interpreted Ward numbers using augmented perfect matchings. Their work develops the recurrence relation for augmented perfect matchings and establishes a bijection between augmented perfect matchings and phylogenetic trees, which is a well-known interpretation for Ward numbers.

Chen's second decomposition $\phi$ explains the equality between items 1 and 3. Moreover, $\phi$ is type-preserving.

We observe that the equality between items 1 and 2 follows from the following bijection, which was independently found by Erd\H{o}s and Sz\'ekely \cite{Erdos-Szekely-1989} and by Haiman and Schmitt \cite{Haiman-Schmitt-1989}.
\begin{theorem}[Erdos-Sz\'ekely, Haiman-Schmitt]
There is a type-preserving bijection between the set of semi-labeled rooted trees with $ k $ unlabeled internal vertices and $ n+1 $ labeled leaves and the set of partitions of $ k $ blocks on $ n + k  $ elements. \label{thm:semilabeled}
\end{theorem}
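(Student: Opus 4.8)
The plan is to construct an explicit type-preserving bijection $\Psi$ and then check it is invertible. Let $T$ be a total partition tree with leaves labelled $1,\dots,n+1$ and with $k$ unlabelled internal vertices; since every non-root vertex has a unique parent, $T$ has exactly $(n+1)+(k-1)=n+k$ non-root vertices. First I would fix a canonical labelling of the $k-1$ non-root internal vertices by $n+2<n+3<\dots<n+k$: to each internal vertex $v$ attach the pair $(a(v),d(v))$, where $a(v)$ is the least leaf label among the descendants of $v$ and $d(v)$ is the depth of $v$. Two internal vertices sharing the same value of $a$ are necessarily comparable in the ancestor order, so the lexicographic order on these pairs is a total order on the non-root internal vertices, and I would assign $n+2,\dots,n+k$ accordingly. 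Now every non-root vertex carries a label in $[n+k]$, and I define $\Psi(T)$ to be the set partition whose blocks are the child-sets of the internal vertices of $T$ (one block per internal vertex, listing the labels of its children). This partition has exactly $k$ blocks, all of size $\ge 2$ because every internal vertex has $\ge 2$ children, and it partitions $[n+k]$ because every non-root vertex is the child of exactly one internal vertex; an internal vertex with $i+1$ children yields a block of size $i+1$, so $\Psi$ preserves type.

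Next I would build the inverse. Given a partition $\pi$ of $[n+k]$ into $k$ blocks of size $\ge 2$, introduce a vertex $u_B$ for each block $B$, declare $1,\dots,n+1$ to be leaves, and make the elements of each block $B$ the children of $u_B$ --- but an element of $\{n+2,\dots,n+k\}$ must first be identified with one of the vertices $u_B$. The content of the inverse is that there is exactly one such identification making the child/parent relation a tree, with the unmatched block's vertex as root. The observation that unlocks this is that under $\Psi$ the vertex $v_0$ carrying the label $n+2$ is always a child of the root: if its parent $p$ were a non-root internal vertex, then $a(p)\le a(v_0)$ and $d(p)=d(v_0)-1$, so $(a(p),d(p))$ would lexicographically precede $(a(v_0),d(v_0))$, contradicting minimality of $v_0$. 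Hence on the $\pi$ side the root's block is recognized as the block containing $n+2$; peeling off this structure (splitting $\pi$ into the parts coming from the subtrees hanging at the root and re-indexing) and inducting on $k$, with base case $k=1$ being the single block $[n+1]$, should recover $T$ and its canonical labelling.

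The hard part will be exactly this inductive step: showing that the reconstruction is forced and always produces a genuine tree rather than a disconnected or cyclic gluing, and that the canonical order is recoverable from $\pi$ alone --- equivalently, that the least-leaf-descendant statistic of each block can be read off as the tree is rebuilt from the root downward, including keeping track of how the labels $n+2,\dots,n+k$ of $T$ get redistributed among the sub-problems. Once that is settled, verifying that $\Psi$ and the reconstruction are mutually inverse, and that the inverse is type-preserving, is routine bookkeeping.

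Finally, as a cross-check and a shortcut I would record a purely enumerative route: the recurrence proof of item~1 already shows $S_2(n+k,k)$ satisfies Ward's recurrence, and deleting the largest leaf of a total partition tree (distinguishing whether its parent has exactly two children) shows that the count in item~2 satisfies the same recurrence and initial conditions; refining both by type gives equality of the type-generating functions and hence existence of a type-preserving bijection. I would present $\Psi$ explicitly for concreteness and leave the recurrence argument as a remark.
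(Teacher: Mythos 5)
The paper offers no proof of this theorem; it is quoted with citations to Erd\H{o}s--Sz\'ekely and Haiman--Schmitt, so there is no in-paper argument to compare yours against. On its own terms, your forward map is sound: the pairs $(a(v),d(v))$ are distinct (two internal vertices with the same least leaf descendant are both ancestors of that leaf, hence comparable, hence of different depths), so the canonical labelling is well defined; the count of non-root vertices is $n+k$; the blocks are the child-sets, hence partition $[n+k]$ and preserve type; and your lemma that the vertex labelled $n+2$ is a child of the root is correctly argued. One small mismatch: the statement concerns general semi-labeled rooted trees and arbitrary partitions (the paper's following remark pairs degree-one internal vertices with singleton blocks), while you restrict to total partition trees and blocks of size $\ge 2$; your construction extends verbatim, but you should say so.

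The genuine gap is exactly where you flag it, and it is not a routine verification: the inverse. Identifying the root's block as the one containing $n+2$ is only the first step. Whenever an internal vertex has two or more internal children, the remaining blocks and the remaining labels must be distributed among the corresponding subtrees, and nothing in your write-up shows this is forced. It is not forced by the tree condition alone: for the partition $\{6,8\},\{1,7\},\{2,3\},\{4,5\}$ of $[8]$ (so $n+1=5$, $k=4$) there are several assignments of the last three blocks to the subtrees of the root's two internal children, each yielding a legitimate tree, and only one is consistent with your canonical labelling. What must be proved --- and is the real content of the theorem --- is that the labelling is greedily recoverable: siblings receive labels in increasing order of their least leaf descendants, and a label lying between two siblings' labels in your lexicographic order is forced into the earlier sibling's subtree; then the induction closes. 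I believe this can be carried out, but as written the bijection is only half constructed. Your enumerative fallback, by contrast, is essentially complete and does establish existence of a type-preserving bijection: inserting the largest element of $[n+k]$ into a block of size $i$ matches attaching the largest leaf to an internal vertex of degree $i$ with the same multiplicity and weight change, and the size-two/degree-two cases both carry the factor $n+k-1$, so the type-refined counts agree. If you want an explicit map without finishing the induction, citing the original sources, as the paper does, is the honest option.
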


Note that an internal vertex of degree $1$ corresponds to a singleton block. Next we present a type-preserving bijection for items 2 and 3.

\begin{theorem}
For $n \ge 1$, there is a type-preserving bijection between the set of total partitions of $[n]$ whose total partition tree has $ k $ internal vertices (including the root) and the set of increasing Schr\"oder trees on $[n]$ with $k$ blocks.\label{thm:3.3}
\end{theorem}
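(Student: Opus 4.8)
The plan is to build the bijection recursively on the tree structure, using the recursive nature of total partitions. Given a total partition of $[n]$ with $k$ internal vertices, the root of its total partition tree corresponds to the first partition $[n] = C_1 \sqcup C_2 \sqcup \cdots \sqcup C_r$ into $r \geq 2$ blocks (this is an \emph{unordered} partition, since total partition trees are semi-labeled with no order on subtrees), and each non-singleton $C_j$ carries its own total partition recursively. The target object is an increasing Schröder tree on $[n]$: here the root must be the vertex labeled $1$, and the children of $1$ are grouped into an \emph{ordered} partition of blocks, with each block being an ordered list of subtrees whose roots carry the remaining structure increasingly. The first thing I would do is fix the discrepancy between the unordered partition at a total-partition-tree node and the ordered block structure at a Schröder-tree node: I would order the blocks $C_1, \ldots, C_r$ by their minimal elements. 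This is the standard device for turning set partitions into "increasing" objects and it is reversible.

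Next I would set up the recursion carefully. Since an increasing Schröder tree on $[n]$ has root $1$, I first locate the element $1$ in the total partition: it lies in exactly one block, say $C_{j_0}$, at each level of recursion until it becomes a singleton. I would process the total partition tree from the root downward along the "spine" containing $1$: at the top level, $1$ sits in block $C_{j_0}$; I strip $1$ out to become the root of the Schröder tree, and the remaining blocks $C_1, \ldots, \widehat{C_{j_0}}, \ldots, C_r$ together with $C_{j_0} \setminus \{1\}$ (if nonempty) must be assembled into the ordered block structure hanging off the root. The subtlety is matching the \emph{degree bookkeeping}: a node of degree $i+1$ in the total partition tree should map to a block of size $i$ (since an internal vertex of degree $i+1$ has weight $g_i$, matching a block of size $i$ with weight $g_i$), so the off-spine children at the root should form a single Schröder block of the appropriate size, and I would use the increasing meadow viewpoint (Chen's $\phi$) as a guide for how the sizes add up. Concretely I expect to define the block containing each subtree-root by looking at which $C_j$ it came from, ordering siblings within a block by minimal element, and ordering the blocks themselves by minimal element; recursing into each $C_j \setminus (\text{its own spine element})$ then produces the subtrees. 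Type-preservation should then be essentially automatic: the degree-$(i+1)$ internal vertex at each level maps to exactly one size-$i$ block, so the multiset of degrees equals the multiset of (block sizes $+1$), i.e., the types agree.

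I would then verify the map is a bijection by exhibiting the inverse directly — given an increasing Schröder tree, read its root $1$, reconstruct the top-level partition of $[n]$ by taking the leaf-descendant sets of the children-blocks of $1$ (merging the block structure back into an unordered partition and reinserting $1$ into the appropriate block), and recurse — and by checking the two compositions are identities, which reduces to checking that "order blocks by minimal element" and "forget the order" are mutually inverse at each node. The routine part is the induction on $n$ (or on the number of internal vertices); the base case $n = 1$ is the single one-vertex tree on both sides. The main obstacle I anticipate is \textbf{the spine bookkeeping}: cleanly specifying, at a node where $1$ (or the relevant minimum) sits inside a multi-element block $C_{j_0}$, how $C_{j_0} \setminus \{1\}$ is distributed — whether it stays as one Schröder block together with the other siblings or is handled separately — so that degrees and block sizes match \emph{and} increasingness of labels along root-to-leaf paths is preserved. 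Getting this single rule right (and checking it is forced, hence invertible) is the crux; everything else is the standard "sort by minima" translation between set partitions and increasing trees. I would double-check the rule against the small case $n = 3$, where there are four total partitions and correspondingly four increasing Schröder trees on three vertices, and against consistency with Chen's $\phi$ composed with the Erdős–Székely–Haiman–Schmitt bijection of Theorem~\ref{thm:semilabeled}.
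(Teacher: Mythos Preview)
Your recursive framework---order the top-level blocks $B_1,\dots,B_k$ by their minima $m_1<\cdots<m_k$ (so $m_1=1$), make $1$ the Schr\"oder root, and recurse---is exactly the paper's approach. The one place you hesitate is also the one place your tentative rule is off: the paper does \emph{not} assign blocks by ``which $C_j$ a child came from.'' Instead, the root of the Schr\"oder tree receives a single leftmost block $\{m_2,\dots,m_k\}$, with the subtree hanging at $m_i$ being the recursively built increasing Schr\"oder tree $T_i$ on all of $B_i$; then one recurses on the \emph{full} spine block $B_1$ (not $B_1\setminus\{1\}$) to obtain an increasing Schr\"oder tree $T_1$ also rooted at $1$, and merges by identifying the two roots, so that the blocks already present at the root of $T_1$ become the remaining blocks of the final root, placed to the right of $\{m_2,\dots,m_k\}$. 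This makes type-preservation immediate (the degree-$k$ root of the total-partition tree produces exactly one size-$(k{-}1)$ block), and the inverse is simply ``peel off the leftmost block of the root'': its elements $m_2,\dots,m_k$ carry subtrees $T_2,\dots,T_k$, the remainder of the tree is $T_1$, and the top-level unordered partition is recovered as the vertex sets of $T_1,\dots,T_k$.
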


\proof
We establish the bijection through the following explicit construction:

\noindent \textbf{($\Rightarrow$) From a total partition to an increasing Schr\"oder tree:}

Let $P$ be a total partition of $[n]$. We construct an increasing Schr\"oder tree $T$ recursively as follows. For the base case of a trivial total partition on a single element, the corresponding increasing tree is uniquely defined. For non-trivial partitions, let $\pi = \{B_1, B_2, \ldots, B_k\}$ be the first partition of $P$, with blocks ordered increasingly by their minimal elements $m_1, m_2, \ldots, m_k$.
\begin{itemize}
    \item Let $T_1, T_2, \ldots, T_k$ denote the increasing Schr\"oder trees corresponding to the total partitions induced by $P$ on each block $B_1, B_2, \ldots, B_k$.
    \item Construct an increasing Schr\"oder tree $T'$ with root $m_1$ and left-most block $\{m_2, \ldots, m_k\}$, where each $m_i$ represents the tree $T_i$ for $i = 2, \ldots, k$.
    \item Merge $T_1$ with $T'$ by identifying their roots and attaching $T_1$ as the rightmost subtree of $T'$.
\end{itemize}

\noindent \textbf{($\Leftarrow$) From an increasing Schr\"oder tree to a total partition:}

Let $T$ be an increasing Schr\"oder tree with $n$ vertices. The correspondence is straightforward for $n = 1$. For $n > 1$, we proceed recursively:
\begin{itemize}
    \item Let the left-most block of the root of $T$ be $\{m_2, m_3, \ldots, m_k\}$, where each $m_i$ is the root of a subtree $T_i$. Let $T_1$ be the subtree obtained by removing the left-most block of the root (along with its descendants). Note that each $T_i$ is an increasing Schr\"oder tree.
    \item The first partition is recovered as $\{T_1, T_2, \ldots, T_k\}$, with the internal order structure within each $T_i$ being disregarded.
    \item Repeat this procedure for each subtree $T_i$ containing more than one vertex.
\end{itemize}

This bijection demonstrates that each internal vertex in a total partition tree corresponds to a block in the increasing Schr\"oder tree. Consequently, the number of total partition trees with $n + k$ vertices and $k$ internal vertices equals the number $T(n-1,k)$ of increasing Schr\"oder trees with $n$ vertices and $k$ blocks. This confirms that the bijection is type-preserving. The proof is complete.
\qed

We illustrate this bijection with the following example. Figure \ref{ex-total-Schroder} provides a total partition of $[6]$ whose total partition tree has $4$ internal vertices (including the root) and the corresponding increasing Schr\"oder trees on $[6]$ with $4$ blocks, both of type $(2,1^3)$.
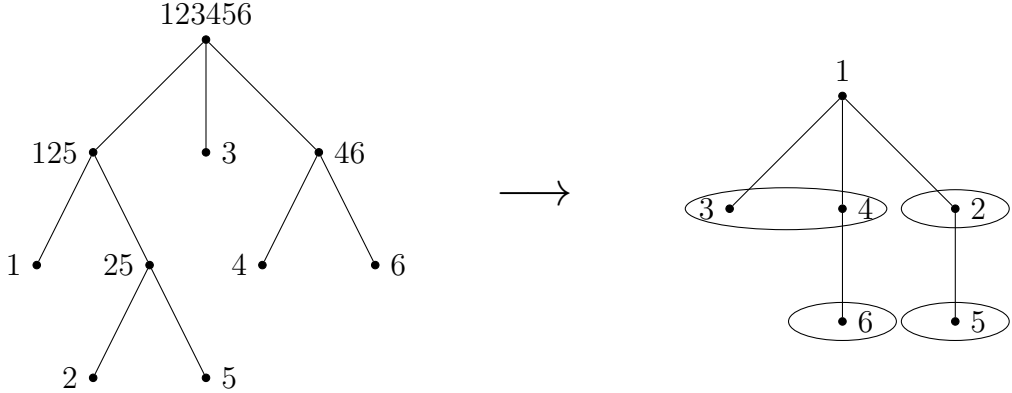
\begin{figure}[htbp]
\centering
\begin{minipage}{0.45\textwidth}
  \centering
  \begin{tikzpicture}[
      level 1/.style={level distance=15mm,sibling distance=15mm},
      level 2/.style={level distance=15mm,sibling distance=15mm},
      level 3/.style={level distance=15mm,sibling distance=15mm}]
  \node[ns,label=90:$123456$] {} [grow=down]
    child {node[ns,label=180:$125$] {}
      child {node[ns,label=180:$1$] {}}
        child {node[ns,label=180:$25$] {}
          child {node[ns,label=180:$2$] {}}
            child {node[ns,label=0:$5$] {}}
          }
          }
    child {node[ns,label=0:$3$]{}}
    child {node[ns,label=0:$46$] {}
    child {node[ns,label=180:$4$]{}}
          child {node[ns,label=0:$6$] {}}
          };
  \end{tikzpicture}
\end{minipage}%
\hfill
\begin{minipage}{0.08\textwidth}
  \centering
  \tikz{\node[scale=1.5]{$\longrightarrow$};}
\end{minipage}%
\hfill
\begin{minipage}{0.45\textwidth}
  \centering
  \begin{tikzpicture}[
      level 1/.style={level distance=15mm,sibling distance=15mm},
      level 2/.style={level distance=15mm,sibling distance=15mm}]
  \node[ns,label=90:$1$] {} [grow=down]
    child {node[ns,label=180:$3$](3) {}
          }
    child {node[ns,label=0:$4$](4) {}
    child {node[ns,label=0:$6$](6) {}}}
    child {node[ns,label=0:$2$](2) {}
    child {node[ns,label=0:$5$](5) {}}
          };
\node [ellipse, draw, fit=(3) (4), y radius=3em, x radius=1em] {};
\draw (2) ellipse (1.7em and 0.6em);
\draw (5) ellipse (1.7em and 0.6em);
\draw (6) ellipse (1.7em and 0.6em);
\end{tikzpicture}
\end{minipage}
\caption{Illustration of the bijection between total partitions and increasing Schr\"oder trees.}
\label{ex-total-Schroder}
\end{figure}

\subsection{Weighted version}
\label{Sec-3.2}
Let us define the weighted Ward number $W^g(n,k)$ for non-negative integers $n$ and $k$ as the total weight of partitions of the set $[n+k]$ into $k$ blocks, each of size at least two. The initial condition is given by $W^g(n,0) = \delta_{n,0}$ for all $n \geq 0$. Recall that the weight of a block of size $i+1$ is assigned  $g_i$. This concept can also be interpreted in terms of total partition trees and increasing Schr\"oder trees.

The following generating function expression can be derived directly:
\begin{equation}
\label{eq:W^g}
  W^g(n,k) = \left[\frac{ x^{n+k}}{(n+k)!}\right] \ \  \frac{1}{k!} \left( g_1\frac{ x^2}{2!} + g_2\frac{ x^3}{3!} + \cdots \right)^{\!\!k} ,
\end{equation}
where $[x^{n+k}]$ denotes the coefficient extraction operator.

To conclude this section, we summarize specializations of the sequence $\{g_i\}$ and their combinatorial interpretations. Here, $W^g(n)$ represents the sum of $W^g(n,k)$ over all $k$, while $\widetilde{W}^g(n)$ denotes the alternating sum $\sum_k (-1)^{n+k} W^g(n,k)$. The key results are presented in Table~\ref{tab:specializations}, with detailed formulas and references provided below.

\begin{table}[htbp]
\centering
\renewcommand{\arraystretch}{1.3}      
\setlength{\tabcolsep}{8pt}            
\small
\caption{Specializations of $g_i$ and combinatorial interpretations}
\label{tab:specializations}
\begin{tabular}{@{}ccccc@{}}
\toprule
\textbf{\boldmath$g_i$} &
\makecell[c]{\textbf{Sequence name of} \boldmath$W(n,k)$} &
\textbf{OEIS} &
\boldmath$W^g(n)$ &
\boldmath$\widetilde{W}^g(n)$ \\
\midrule
$1$                           & Ward set numbers       & A269939 & A000311             & $n!$ \\
$i+1$                         & Enriched Ward numbers  & A368584 & A053492             & $(n+1)^n$ \\
$i!$                          & Ward cycle numbers     & A269940 & A032188             & $1$ \\
$(i+1)!$                      & Weighted Ward numbers  & A357367 & A032037             & $(n+1)!$ \\
$(i-1)!$                      & ---                    & A239098 & A000312,\,$n^n$     & A074059 \\
$g_i=\delta_{i,1}$            & ---                    & ---     & A001147,\,$(2n-1)!!$& $W^g(n)$ \\
$g_i=2\delta_{i,1}$           & ---                    & ---     & A001813,\,$(2n)!/n!$& $W^g(n)$ \\
$g_i=g_j\delta_{i,j}$         & Partition coefficients & ---     &
$\displaystyle
\frac{(i(j+1))!}{i!\,((j+1)!)^i}\,g_j^{\,i}\delta_{n,ij}$ &
$(-1)^{n+i}W^g(n)\delta_{n,ij}$ \\
\bottomrule
\end{tabular}
\end{table}

Detailed descriptions:
\begin{enumerate}
   \item For $g_i = 1$, the ordinary Ward numbers (or Ward set numbers) are given by $W(n,k) = \sum_{m=0}^{k} (-1)^{m+k} \binom{n+k}{n+m} S(n+m,m)$.
The sum $W(n)$ is the number of total partitions of $n+1$ (A000311).
       Theorem \ref{SSch} proves combinatorially:
    \[
    \widetilde{W}(n) = \sum_{k} (-1)^{n+k} W(n,k) = n!.
    \]

    \item When $g_i = i+1$, the enriched Ward numbers is $\overline{W}(n,k) = k! \binom{n+k}{k} S(n,k)$ (A368584). The sum $\overline{W}(n)$ is A053492, counting
        Schr\"oder trees with $n+1$ vertices. Theorem \ref{SSch} proves combinatorially:
    \[
    \widetilde{\overline{W}}(n) = \sum_{k} (-1)^{n+k} \overline{W}(n,k)= \sum_k  k! \binom{n+k}{k} S(n,k) = (n+1)^n,
    \]
    which enumerates labeled rooted trees (A000169).

    \item Setting $g_i = i!$ yields the Ward cycle numbers $W^g(n,k) = \sum_{m=0}^{k} (-1)^{m+k} \binom{n+k}{n+m} |s(n+m,m)|$, where $s(n,k)$ are the signed
        Stirling numbers of the first kind. This is A269940. The total sum $W^g(n)$ is A032188, counting plane increasing trees on $n+1$ vertices where
        each vertex of degree $k \geq 1$ admits $2^{k-1}$ colorings. Additionally, the alternating sum is 1.

    \item If $g_i = (i+1)!$, then $W^g(n,k)$ is A357367, which can be written as $\sum_{m=0}^{k} (-1)^{m+k} \binom{n+k}{n+m} L(n+m,m)$, with $L(n+m,m)$ the
        unsigned Lah numbers (A271703). The sum $W^g(n)$ is A032037 and equals to $n!$ times the $(n-1)$-th little Schr\"oder number (A001003). Moreover, the
        alternating sum is $(n+1)!$.

    \item For $g_i = (i-1)!$, the numbers $W^g(n,k)$ are A239098, representing constant terms of polynomials related to Ramanujan's $\psi$ polynomials. The
        sum $W^g(n) = n^n$ is A000312. The alternating sum $\widetilde{W}^g(n)$ is A074059, giving the dimension of the cohomology ring of the moduli space of
        genus $0$ curves with $n$ marked points, subject to associativity equations in physics.

    \item When $g_1 = 1$ and $g_k = 0$ for $k > 1$, $W^g(n) = (2n-1)!!$ (A001147). This counts labeled plane increasing trees and solves Schr\"oder's third
        problem.

    \item When $g_1 = 2$ and $g_k = 0$ for $k > 1$, $W^g(n) = (2n)!/n!$ (A001813), counting labeled plane trees on $n+1$ vertices.

    \item In general, for fixed $j \geq 1$ with $g_i = 0$ for $i \neq j$, $W^g(n) = 0$ unless $n = ij$ for some integer $i \geq 0$. Then
    \[
    W^g(ij) = \frac{(i(j+1))!}{i! \, ((j+1)!)^i} \, g_j^i.
    \]
    The coefficient counts partitions of $i(j+1)$ labeled items into $i$ unlabeled boxes of size $j+1$ (A060540).
\end{enumerate}
We remark that in the last three cases, $W^g(n,k)$ is nonzero only for a particular $k$, therefore, the nonzero term of $W^g(n,k)$ is of the same value as $W^g(n)$.

\section{A variation of the Lagrange inversion formula}
\label{Sec-4}
There exists a less familiar reformulation of the Lagrange inversion formula, which is equivalent to the classical version. In this section, we establish a connection between this variation and the decomposition algorithm for increasing Schr\"oder trees.

Let \( f(x) = \sum_{n \geq 1} a_n \frac{x^n}{n!} \) be a formal power series with \( a_1 \neq 0 \), and let \( g(x) = \sum_{n \geq 1} b_n \frac{x^n}{n!} \) denote its compositional inverse, satisfying \( f(g(x)) = g(f(x)) = x \). The compositional inverse is also denoted by $ f^{<-1>}(x) $. The classical Lagrange inversion formula expresses:
$$ b_n =  \left[ \frac{x^{n-1}}{(n-1)!}\right] \quad \left( \frac{x}{f(x)} \right)^{\!\!n}. $$
We now discuss the following equivalent formulation of the Lagrange inversion formula. See Theorem 2.6.1 in the nice survey by Gessel \cite{gessel}
about the Lagrange inversion formula.

\begin{theorem}
Let $ h(x) $ be a formal power series with $ h(0) = 0 $, $ h'(0) = 1 $. Then we have
\begin{align}
    h^{<-1>}(x) = x + \sum_{k \geq 1} \frac{1}{k!} \left[ (x - h(x))^k \right]^{(k-1)}. \label{4.1}
\end{align}\label{thm:4.1}
\end{theorem}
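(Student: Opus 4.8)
The plan is to derive the identity \eqref{4.1} from the classical Lagrange inversion formula stated just above it, treating $h$ and its inverse via the substitution $f(x)=x-h(x)$. Since $h(0)=0$ and $h'(0)=1$, we may write $h(x)=x-f(x)$ where $f$ has order $\geq 2$, i.e. $f(x)=\sum_{n\geq 2}a_n x^n/n!$; then $h^{<-1>}(x)-x$ is the quantity I want to expand. First I would set $G(x)=h^{<-1>}(x)$ and note that $G$ satisfies $G(x)-x = f(G(x))$ (apply $h$ and rearrange: $x = h(G(x)) = G(x)-f(G(x))$). This is a fixed-point equation of the classic Lagrange type $G(x)=x+f(G(x))$, so writing $G(x)=x+R(x)$ the standard Lagrange inversion / Bürmann setup applies to the coefficients of $R$.

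The key computation is then a coefficient extraction. Applying the classical Lagrange inversion formula to $G(x)=x+f(G(x))$ — or more precisely its Bürmann form for the expansion of a function composed with $G$ — gives, for $k\geq 1$,
\begin{align*}
[x^k]\, G(x) = \frac{1}{k}\,[x^{k-1}]\,\bigl(1+f'(x)\bigr)\Bigl(\tfrac{x}{x-f(x)}\Bigr)^{\!k}\ \text{(schematically)},
\end{align*}
but it is cleaner to extract $[x^k](G(x)-x) = [x^k]\sum_{j\geq1}\frac{1}{j!}(f(x))^j$-type contributions directly. The better route: use the Lagrange inversion formula in the form $[x^n]\,\Phi(G(x)) = \frac{1}{n}[x^{n-1}]\,\Phi'(x)\bigl(x/f_0(x)\bigr)^n$ where $f_0$ is the "functional" generator, applied with $\Phi(x)=x$, to get $[x^n](G(x)-x)$ expressed through powers of $f(x)=x-h(x)$. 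Converting the resulting $[x^{n-1}]$ with a factor $1/n$ into the derivative form $[(x-h(x))^k]^{(k-1)}$ via the elementary identity $\frac{1}{k}[x^{k-1}]\,k\,u(x)\,(\text{stuff}) = \frac{1}{k!}[x^{?}](\ldots)^{(k-1)}$ — i.e. recognizing that $\frac{1}{k}[x^{k-1}]$ of a $k$-th power matches the normalized $(k-1)$-st derivative at $0$ of that power divided by $(k-1)!$ — produces exactly the sum $\sum_{k\geq1}\frac{1}{k!}\bigl[(x-h(x))^k\bigr]^{(k-1)}$ once one tracks that $[(x-h(x))^k]^{(k-1)}$ means the $(k-1)$-st derivative evaluated appropriately (as a power series, it is $\sum_m \binom{m}{k-1}(k-1)!\,c_m x^{m-k+1}$ where $c_m=[x^m](x-h(x))^k$, and its role here is to supply the coefficient $\frac{(k-1)!\,c_{2k-1}}{?}$ contributing to each $[x^n]$).

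Concretely the main steps in order: (1) rewrite the claim as an identity for $G(x)-x$ where $G=h^{<-1>}$ and establish the functional equation $G(x)=x+f(G(x))$ with $f=x-h$; (2) apply classical Lagrange inversion to extract $[x^n](G(x)-x)$ as $\frac{1}{n}[x^{n-1}](x/h(x))^{\,n}\cdot(\ldots)$, or directly as a sum over $k$ of contributions from $f^k$; (3) recognize that the $k$-th summand $\frac{1}{k!}\bigl[(x-h(x))^k\bigr]^{(k-1)}$, as a formal power series in $x$, has $[x^n]$-coefficient equal to $\frac{1}{k!}\binom{n+k-1}{k-1}(k-1)!\,[x^{n+k-1}](x-h(x))^k = \frac{1}{k}\binom{n+k-1}{k-1}[x^{n+k-1}](x-h(x))^k$; (4) check this matches the Lagrange expansion of $[x^n](G(x)-x)$ term by term — this is the Bürmann/Lagrange formula for $\log$ or for iterated composition, and is exactly Theorem 2.6.1 in Gessel's survey. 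The main obstacle I anticipate is step (3)–(4): correctly interpreting the notation $[(x-h(x))^k]^{(k-1)}$ (it is a formal power series, not a number — the $(k-1)$-st derivative of the power series $(x-h(x))^k$), and then matching the combinatorial coefficient $\frac{1}{k}\binom{n+k-1}{k-1}$ against what classical Lagrange inversion produces, which requires care with the shift between "geometric/functional" and "exponential" conventions and with the fact that $(x-h(x))^k$ has order $2k$ so only finitely many $k$ contribute to each $[x^n]$. A cleaner alternative for step (2)–(4) is to cite Gessel's survey directly (Theorem 2.6.1), since the excerpt explicitly grants that reference; in that case the proof reduces to step (1) plus a remark that \eqref{4.1} is the stated form of that theorem after the substitution $f\leftrightarrow x-h$, so the "hard part" becomes merely verifying the equivalence of the two conventions, which is routine.
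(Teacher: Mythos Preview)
Your route is analytic: you set $f=x-h$, recognize the fixed-point equation $G=x+f(G)$ for $G=h^{<-1>}$, and then either extract coefficients via the classical Lagrange formula or simply invoke Gessel's Theorem~2.6.1. This is a legitimate path, and the coefficient identity you need in step~(3)--(4), namely
\[
[x^n]\,\tfrac{1}{k!}\bigl(f^k\bigr)^{(k-1)}=\binom{n+k-1}{k}\,[x^{n+k-1}]f^k,
\]
does match what Lagrange inversion produces once one expands $(x/h(x))^n=\sum_{j\ge0}\binom{n+j-1}{j}(f(x)/x)^j$. That said, your write-up is more of a plan than a proof: the ``schematically'' formula in step~(2) is not quite right as written, and you never actually close the loop between the Lagrange coefficient and the derivative form---you gesture at it and then fall back on citing Gessel.

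The paper takes a completely different, combinatorial route. It assigns weights $g_i$ to blocks of increasing Schr\"oder trees, derives from the tree structure a functional equation showing that the exponential generating function $W(x)$ of these weighted trees satisfies $h(W(x))=x$ with $h(x)=x-f(x)$ and $f(x)=\sum_{n\ge1}g_{n-1}x^n/n!$, and then computes $W(x)$ a second way via Chen's type-preserving bijection (Theorem~\ref{thm:2.2}) to increasing meadows, which yields the right-hand side $x+\sum_{k\ge1}\tfrac{1}{k!}(f^k)^{(k-1)}$ directly. Equating the two expressions for $W(x)$ gives \eqref{4.1}. So the paper's proof is not a formal manipulation of Lagrange inversion but a bijective interpretation of both sides; this is the entire point of Section~\ref{Sec-4}, and it is what the abstract promises (``a combinatorial interpretation of a Lagrange inversion variant''). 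Your approach recovers the identity more quickly but sheds no combinatorial light, and in its cleanest form (cite Gessel) it is exactly the reference the paper already gives before stating the theorem.
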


\proof
Following the notations in Section 3.2, let \( g_1, g_2, \ldots \) be a sequence of indeterminates. The weight of a block with \( i \) vertices in a Schr\"oder tree is assigned \( g_i \). Denote by \( W_n \) the total weight of all increasing Schr\"oder trees on \( n \) vertices, and by \( V_n \) the weight for all increasing Schr\"oder trees on \( n \) vertices such that the root has only one block.

We aim to establish a functional equation for
\[
W(x) = \sum_{n \geq 1} \frac{1}{n!} W_n x^n.
\]
A combinatorial formula for \( W_n \) will lead to a solution of this functional equation.

Consider an increasing Schr\"oder tree \( T \) on \( n+1 \) vertices such that the root has \( k \) blocks. Let \( B_i \) be the set of vertices in the \( i \)-th block of the root of \( T \) along with all their descendants. Then \( B_1, B_2, \ldots, B_k \) form a partition of \( \{2, 3, \ldots, n+1\} \). By adding the element 1 as the root to the subtree restricted to \( B_i \), we obtain an increasing Schr\"oder tree where the root has only one block. This leads to the recurrence relation:
\begin{align}
W_{n+1} = \sum_{(B_1, B_2, \ldots, B_k)} V_{b_1+1} V_{b_2+1} \cdots V_{b_k+1}, \label{eq:4.2}
\end{align}
where \( (B_1, B_2, \ldots, B_k) \) ranges over all ordered partitions of \( \{2, \ldots, n+1\} \), and \( b_i = |B_i|\).

For an increasing Schr\"oder tree \( Q \) where the root has only one block with \( n+1 \) vertices, removing the root of \( Q \) gives the recurrence relation:
\begin{align}
V_{n+1} = \sum_{\{C_1, C_2, \ldots, C_k\}} g_k W_{c_1} W_{c_2} \cdots W_{c_k}, \label{eq:4.3}
\end{align}
where \( \{C_1, C_2, \ldots, C_k\} \) ranges over all unordered partitions of \( \{2, \ldots, n+1\} \), and \( c_j = |C_j| \).

Using generating function theory, equations \eqref{eq:4.2} and \eqref{eq:4.3} lead to:
\begin{align}
\sum_{n \geq 1} \frac{1}{n!} W_{n+1} x^n &= \sum_{k \geq 1} \left( \sum_{n \geq 1} \frac{1}{n!} V_{n+1} x^n \right)^{\!\!k}, \label{4.4} \\
\sum_{n \geq 1} \frac{1}{n!} V_{n+1} x^n &= g \left( \sum_{n \geq 1} \frac{1}{n!} W_n x^n \right), \label{4.5}
\end{align}
where
\[
g(x) = \sum_{n \geq 0} \frac{1}{n!} g_n x^n
\]
with the convention \( g_0 = 0 \). Define
\[
f(x) = \sum_{n \geq 1} \frac{1}{n!} g_{n-1} x^n.
\]
From equations \eqref{4.4} and \eqref{4.5}, we derive the differential equation:
\[
W'(x) = \frac{1}{1 - f'(W(x))},
\]
which can be rewritten as:
\[
W'(x) - f'(W(x)) W'(x) = 1.
\]
This leads to the equation:
\[
W(x) - f(W(x)) = x.
\]
Let \( h(x) = x - f(x) \). Then \( h(0) = 0 \), \( h'(0) = 1 \), and importantly, \( W(x) = (h(x))^{<-1>} \).

We now compute $W(x)$ in an alternative manner. By applying the type-preserving bijection established in Theorem \ref{thm:2.2}, we observe that $W_n$ can be expressed as the sum of $W_{n,k} = W^g(n-1,k)$, as defined in \eqref{eq:W^g}. 
For $k \geq 1$, we have the generating function relation:
\begin{align}
\sum_{n \geq 2} \frac{1}{(n + k - 1)!} W_{n,k} x^{n+k-1} = \frac{1}{k!} \left( \sum_{n \geq 1} \frac{1}{n!} g_{n-1} x^n \right)^{\!\!k}=\frac{1}{k!} f^k.
\label{4.7}
\end{align}

By differentiating both sides of \eqref{4.7} $(k-1)$ times, we obtain the relation:
$$
\sum_{n \geq 2} \frac{1}{n!} W_{n,k} x^n = \left( \frac{1}{k!} f^k \right)^{\!\!(k-1)}.
$$
Summing over all $k$, we arrive at the expression:
\begin{align}
W(x) = x + \sum_{k \geq 1} \left( \sum_{n \geq 2} \frac{1}{n!} W_{n,k} x^n \right) = x + \sum_{k \geq 1} \frac{1}{k!} (f^k)^{(k-1)}.
\end{align}
This completes the derivation of the desired formula for the inverse of $h(x)$.
\qed

It is worth noting that the restriction on the coefficient of $x$ in $h(x)$ from the previous theorem can be removed by utilizing the relation $[a h(x)]^{<-1>} = h^{<-1>}(x/a)$ for any nonzero constant $a$. In the context of Theorem \ref{thm:4.1}, setting $g_i = 1$ for all $i$ reduces $W_n$ to the number of increasing Schr\"oder trees with $n$ vertices. Consequently, the generating function for $W_n$ becomes $W(x) = (1 + 2x - e^x)^{<-1>}$, which coincides with the generating function for total partitions. As a consequence of Theorem \ref{thm:4.1}, we have the formula
\begin{align}
W_n=\sum_{1 \le i \leq k \leq n} (-1)^{k-i} S(n+i-1,i) \binom{n+k-1}{n+i-1}.
\end{align}
The classical Lagrange inversion formula also gives the following infinite sum formula:
\begin{align}
    W_n = \sum_{k \geq 0} \frac{1}{2^{n+k}} S(n+k-1,k),
\end{align}
where recall that $S(0,0) = 1$ and $S(n,0) = 0$ for $n \geq 1$.

\section{Concluding Remarks}
We have provided an interpretation of the Ward numbers $W(n,k)$ using increasing Schr\"oder trees and considered their weighted counterparts. The formulas for weighted Ward numbers presented in Section \ref{Sec-3.2} can be derived through generating functions, specifically as the compositional inverse of an explicit generating function within the framework of Theorem \ref{thm:4.1}. Observing the results in Table~\ref{tab:specializations}, it appears that $\widetilde{W}^g(n)$ admits a particularly elegant formula. We have established combinatorial proofs for both the ordinary and enriched cases. Exploring combinatorial proofs for the remaining cases might be an interesting direction for further research.

We propose the following open problem: Find a direct bijective proof of Theorem \ref{IS-S}. That is, construct a direct bijection between enriched increasing Schr\"oder trees and Schr\"oder trees. While these two structures share similarities, they exhibit significant differences. Our current proof heavily relies on Chen's second decomposition, which is elegant but technical.

\noindent \textbf{Acknowledgement.} The work was supported by the National Science Foundation of China.

\end{document}